\numberwithin{equation}{section}
\theoremstyle{plain}
\newtheorem{theorem}{Theorem}[section]
\newtheorem{proposition}[theorem]{Proposition}
\theoremstyle{definition}
\newtheorem*{example}{Example}
\theoremstyle{remark}
\newtheorem*{remark}{Remark}
\newcommand{\Rat}{\mathbb Q}
\newcommand{\Z}{\mathbb Z}
\newcommand{\abs}[1]{\lvert#1\rvert}
\newcommand{\la}{\lambda}
\newcommand{\boop}{\vspace{0.5cm}}
\begin{document}

\title[Combinatorial Properties of Rogers-Ramanujan-Type Series]
{Combinatorial Properties of Rogers-Ramanujan-Type Identities Arising from Hall-Littlewood Polynomials}

\author{Claire Frechette and Madeline Locus}

\address{Brown University, 
69 Brown St, BOX 3390, Providence, RI 02912}
\email{claire\_frechette@brown.edu}

\address{2710 Sugarwood Dr, Sugar Land, TX 77478}
\email{maddie93@uga.edu}

\thanks{We would like to thank the NSF for supporting the Emory REU in Number Theory}

\subjclass[2010]{11P84, 05A19}

\keywords{Rogers-Ramanujan identities, Hall-Littlewood polynomials, partition congruences}

\begin{abstract} 
Here we consider the $q$-series coming from the Hall-Littlewood polynomials,
\begin{equation*}
R_\nu(a,b;q)=\sum_{\substack{\la \\[1pt] \la_1\leq a}}
q^{c\abs{\la}} P_{2\la}\big(1,q,q^2,\dots;q^{2b+d}\big).
\end{equation*}
These series were defined by Griffin, Ono, and Warnaar in their work on the framework of the Rogers-Ramanujan identities. We devise a recursive method for computing the coefficients of these series when they arise within the Rogers-Ramanujan framework. Furthermore, we study the congruence properties of certain quotients and products of these series, generalizing the famous Ramanujan congruence
\begin{equation*}
p(5n+4)\equiv0\pmod{5}.
\end{equation*}
\end{abstract}

\maketitle

\section{Introduction and Statement of Results}\label{Intro}
The famous Rogers-Ramanujan identities
\begin{align}\label{G}
G(q)&:=\sum_{n=0}^\infty\frac{q^{n^2}}{(1-q)\cdots(1-q^n)}
=\prod_{n=0}^\infty\frac{1}{(1-q^{5n+1})(1-q^{5n+4})}\\
\label{H}
H(q)&:=\sum_{n=0}^\infty\frac{q^{n^2+n}}{(1-q)\cdots(1-q^n)}
=\prod_{n=0}^\infty\frac{1}{(1-q^{5n+2})(1-q^{5n+3})}
\end{align}
\cite[p. 384]{HW} have inspired discoveries in many fields of mathematics and physics, such as algebraic geometry, group theory, knot theory, modular forms, orthogonal polynomials, statistical mechanics, probability, and transcendental number theory (see references in \cite{GOW}). Much of their importance stems from the fact that the Rogers-Ramanujan infinite sums correspond to $q$-series which happen to equal infinite products. Recently, Griffin, Ono, and Warnaar unearthed a more general framework for Rogers-Ramanujan-type identities where the infinite sum of symmetric functions is equal to an infinite product with periodic exponents, which turns out essentially to be a modular function. These series are defined using Hall-Littlewood polynomials $P_\lambda(x;q)$ (see Section 3.1 for definitions) \cite{GOW}. 

To define these polynomials, we require notation for an \textit{integer partition}, a non-increasing sequence of nonnegative integers with finitely many nonzero terms. For a partition $\lambda = (\lambda_1,\lambda_2,...)$, we let $|\lambda| := \lambda_1 + \lambda_2+ ...$ and $2\lambda := (2\lambda_1,2\lambda_2,...)$. In particular, if $\lambda$ is a partition of length $n$, then the corresponding Hall-Littlewood polynomial is a symmetric function in $n$ variables. So, for particular ordered pairs $\nu = (c,d)$ and arbitrary $a,b\geq 1$, we consider $q$-series of the form

\begin{equation}\label{genR}
R_\nu(a,b;q) = \sum_{\substack{\la \\[1pt] \la_1\leq a}}
q^{c\abs{\la}} P_{2\la}\big(1,q,q^2,\dots;q^{2b+d}\big) 
\end{equation}
which correspond the left hand sides of the Rogers-Ramanujan identities if we take $a = b = 1$ and $d = -1$. Then, the series becomes
\[\sum_{\substack{\la \\[1pt] \la_1\leq 1}}
q^{c\abs{\la}} P_{2\la}\big(1,q,q^2,\dots;q\big)= \sum_{n=0}^\infty
q^{c|(1^n)|} P_{(2^n)}\big(1,q,q^2,\dots;q\big)
\]
since the sum is over the empty partition and partitions containing $n$ copies of 1, which we denote as $\la = (1^n)$, for all $n\geq 1$. Since 
\[ 
q^{c|(1^n)|} P_{(2^n)}\big(1,q,q^2,\dots;q\big) = \frac{q^{n^2 + (c-1)n}}{(1-q) \cdots (1-q^n)}
\]
this expression yields the series side of the Rogers-Ramanujan identities \eqref{G} and \eqref{H} when $c = 1,2$.
The work of Griffin, Ono, and Warnaar shows that for special choices of $\nu$, $R_\nu(a,b,q)$ equals an infinite product of modular functions \cite{GOW}. Notice that this framework then gives a way to define the $q$-series sides of the Rogers-Ramanujan type identities in terms of symmetric functions. 

Therefore, the main objects of study here are the $q$-series defined by\eqref{genR}. First, we show that this framework gives rise to a recursive method of computing the coefficients of these $q$-series, which allows us to directly compute these $q$-series.
\cite{GOW} proves that the expression in \eqref{genR} equals an infinite product of terms $(1-q^n)$. For the specific pairs $\nu$ in $\{(1,-1),(2,-1),(1,0),(2,-2)\}$, the pairs considered in \cite{GOW}, we define $c_\nu(a,b;t)$ as follows:
\begin{equation}
R_\nu(a,b;q) = \prod _{t=1}^\infty (1-q^t)^{c_\nu(a,b;t)}.
\end{equation}
The exponents $c_\nu(a,b;t)$ are easy to compute using Theorems 1.1, 1.2, and 1.3 from \cite{GOW} (see Section \ref{frame}). In particular, $c_\nu(a,b;t)$ show striking  patterns and uniformity. For example, if $a$ or $b$ is 1, we have the following descriptions, where $\kappa = 2a + 2b+ 1$,
\begin{equation}
\begin{split}
 c_{(1,-1)}(1,b;t) &= \begin{cases} 0 & \text{if $t \equiv 0, \pm 2 \pmod{\kappa}$}\\ -1& \text{else}\end{cases}\\
 c_{(1,-1)}(a,1;t) &= \begin{cases} 0 & \text{if $t \equiv 0, \pm (a+1) \pmod{\kappa}$}\\ -1& \text{else}\end{cases}\\
 c_{(2,-1)}(1,b;t) &= \begin{cases} 0 & \text{if $t \equiv 0, \pm 1 \pmod{\kappa}$}\\ -1 & \text{else}\end{cases}\\
 c_{(2,-1)}(a,1;t) &= \begin{cases} 0 & \text{if $t \equiv 0, \pm 1 \pmod{\kappa}$}\\ -1 & \text{else}\end{cases}
\end{split}
\end{equation}
Using these exponents, we find that the coefficients of $R_\nu$ are intertwined with a particularly notable set of universal polynomials independent of $a,b$: if $n \geq 2$, then we define
\begin{equation}\label{defF}
\begin{split}
 \widehat{F}_n(x_1,&...,x_{n-1}) \\
 &:= \sum \limits_{\substack{m_1,...,m_{n-1} \geq 0 \\ m_1 + 2m_2 + ... + (n-1)m_{n-1} = n}} (-1)^{m_1 + ... + m_{n-1}} \cdot \frac{(m_1 + ... + m_{n-1} - 1)!}{m_1!\cdots m_{n-1}!} \cdot x_1^{m_1} \cdots x_{n-1}^{m_{n-1}}.
\end{split}
\end{equation}
For example, the first few polynomials $\widehat{F}_n$ are
\begin{align*}
\widehat{F}_2 &= \frac{1}{2}x_1^2\\
\widehat{F}_3 &= -\frac{1}{3}x_1^3 + x_1x_2\\
\widehat{F}_4 &= \frac{1}{4}x_1^4 + x_1x_3 - x_1^2x_2 + \frac{1}{2}x_2^2
\end{align*}
These polynomials allow us to recursively determine the coefficients of $R_\nu(a,b;q)$. 
\begin{theorem}\label{coeffs}
Assuming the notation above, if $\nu$ is in the set $\{(1,-1),(2,-1),(1,0),(2,-2)\}$, suppose 
\[
R_\nu(a,b;q) = 1 + \sum_{n = 1}^\infty a_\nu(n)q^n.
\]
Then, 
\begin{equation*}
a_\nu(n) = \widehat{F}_{n}(a_\nu(1),...,a_\nu(n-1)) - \frac{1}{n} \sum_{d|n} c_\nu(a,b;d)\cdot d.
\end{equation*}
\end{theorem}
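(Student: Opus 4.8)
The plan is to take the logarithmic derivative of the product formula and match coefficients. Starting from $R_\nu(a,b;q) = \prod_{t=1}^\infty (1-q^t)^{c_\nu(a,b;t)}$, I would take $\log$ of both sides to get
\[
\log R_\nu(a,b;q) = \sum_{t=1}^\infty c_\nu(a,b;t) \log(1-q^t) = -\sum_{t=1}^\infty c_\nu(a,b;t) \sum_{k=1}^\infty \frac{q^{tk}}{k}.
\]
Collecting the coefficient of $q^n$ on the right and writing $n = tk$, this becomes $-\sum_{d \mid n} c_\nu(a,b;d) \cdot d / n$ after reindexing ($d = t$, $k = n/d$, so $1/k = d/n$). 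So if we set $L_\nu(q) := \log R_\nu(a,b;q) = \sum_{n\geq 1} \ell_\nu(n) q^n$, then $\ell_\nu(n) = -\frac{1}{n}\sum_{d\mid n} c_\nu(a,b;d)\, d$, which is exactly the second term in the claimed formula.

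It then remains to show that $a_\nu(n) - \widehat F_n(a_\nu(1),\dots,a_\nu(n-1)) = \ell_\nu(n)$, i.e., that $\widehat F_n$ is precisely the polynomial expressing the ``non-logarithmic part'' of the $n$-th coefficient in terms of the lower-order coefficients of $R_\nu$. For this I would exponentiate: writing $R_\nu = \exp(L_\nu)$ and expanding $\exp$, the coefficient of $q^n$ in $\exp\big(\sum_{j\geq1}\ell_\nu(j)q^j\big)$ is the sum over all ways to write $n = m_1\cdot 1 + m_2\cdot 2 + \cdots$ of $\prod_j \ell_\nu(j)^{m_j}/m_j!$. Isolating the term where only $m_1$ is nonzero does not directly help; instead I would use the standard relation between power sums / logarithmic coefficients and the coefficients of the series themselves. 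Concretely, differentiating $R_\nu = \exp(L_\nu)$ gives $q R_\nu'(q) = R_\nu(q)\cdot q L_\nu'(q)$, i.e. $\sum_n n\, a_\nu(n) q^n = \big(1 + \sum_n a_\nu(n)q^n\big)\big(\sum_n n\,\ell_\nu(n) q^n\big)$ (with $a_\nu(0)=1$). Comparing coefficients of $q^n$ yields $n\, a_\nu(n) = n\,\ell_\nu(n) + \sum_{j=1}^{n-1} a_\nu(n-j)\, j\,\ell_\nu(j)$, a recursion that determines $a_\nu(n)$ from $\ell_\nu(n)$ and lower $a_\nu$'s. The content of the theorem is thus the purely formal identity
\[
\widehat F_n(a_\nu(1),\dots,a_\nu(n-1)) = a_\nu(n) - \ell_\nu(n),
\]
which by the recursion above equals $\frac{1}{n}\sum_{j=1}^{n-1} j\,\ell_\nu(j)\, a_\nu(n-j)$, and one must check this matches the explicit sum in \eqref{defF}.

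To close that gap I would argue that $\widehat F_n$ is a \emph{universal} polynomial: for any power series $f(q) = 1 + \sum_{n\geq1} b_n q^n$ with $\log f(q) = \sum_{n\geq1} g_n q^n$, one has $b_n - g_n = \widehat F_n(b_1,\dots,b_{n-1})$. Indeed, the formula $g_n = \frac1n\sum$ (over the same index set) $(-1)^{m_1+\cdots+m_{n-1}-1}\frac{(m_1+\cdots+m_{n-1}-1)!\, n}{m_1!\cdots m_{n-1}!} b_1^{m_1}\cdots$ — wait, more cleanly: the well-known expansion of $\log(1 + \sum_{k\geq1} b_k q^k)$ has $q^n$-coefficient equal to $\sum_{m_1+2m_2+\cdots+nm_n=n}(-1)^{m_1+\cdots+m_n-1}\frac{(m_1+\cdots+m_n-1)!}{m_1!\cdots m_n!}b_1^{m_1}\cdots b_n^{m_n}$. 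Separating off the single term with $m_n=1$ (all others zero), which contributes exactly $b_n$, the remaining terms all have $m_n = 0$ and $m_1+\cdots+m_{n-1}\geq 1$, and (after a sign bookkeeping check, since moving $b_n$ to the left flips the sign convention to match \eqref{defF}) these are precisely $-\widehat F_n(b_1,\dots,b_{n-1})$. Hence $g_n = b_n - \widehat F_n(b_1,\dots,b_{n-1})$, i.e. $b_n = \widehat F_n(b_1,\dots,b_{n-1}) + g_n$. Applying this with $b_n = a_\nu(n)$, $g_n = \ell_\nu(n) = -\frac1n\sum_{d\mid n} c_\nu(a,b;d)\,d$ gives the theorem.

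The main obstacle is the sign and combinatorial-factor bookkeeping in identifying the ``tail'' of the $\log$-expansion with $-\widehat F_n$: one must verify that moving the isolated $b_n$ term across the equation flips the overall sign $(-1)^{m_1+\cdots+m_{n-1}-1}\mapsto(-1)^{m_1+\cdots+m_{n-1}}$ consistently with the definition \eqref{defF}, and that the constraint $m_1 + 2m_2 + \cdots + (n-1)m_{n-1} = n$ in \eqref{defF} (rather than the naive $=n$ with an $m_n$ term forced to $0$) is exactly what survives. Everything else — the $\sum_{d\mid n}$ identity from $\log(1-q^t)$, and the exponential recursion — is routine formal power series manipulation. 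I would present it by first proving the universal lemma about $\widehat F_n$ and $\log$, then specializing.
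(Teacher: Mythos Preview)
Your proposal is correct and follows the same core strategy as the paper: take the logarithm of the product side, identify the $q^n$-coefficient of $\log R_\nu$ as $-\tfrac{1}{n}\sum_{d\mid n} c_\nu(a,b;d)\,d$, and then establish the universal identity $b_n = \widehat{F}_n(b_1,\dots,b_{n-1}) + g_n$ for any series $1+\sum b_nq^n$ with log-coefficients $g_n$. The one genuine difference is in how that universal identity is proved. The paper takes the logarithmic \emph{derivative}, obtains the recursion $0 = b(n) + b(n-1)a(1) + \cdots + b(1)a(n-1) + n\,a(n)$ (with $b(n)=\sum_{d\mid n}c(d)d$), pattern-matches it against Newton's identity relating power sums $s_i$ and elementary symmetric functions $\sigma_i$, deduces $b(n)=(-1)^n s_n$ when $\sigma_i=a(i)$, and then invokes the known closed formula for $s_n$ in terms of the $\sigma_i$. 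You instead go straight through the multinomial expansion of $\log(1+\sum b_kq^k)$ and peel off the single $m_n=1$ term. Your route is shorter and avoids the auxiliary appeal to symmetric-function identities; the paper's route has the virtue of explaining \emph{why} $\widehat{F}_n$ has its particular shape (it is, up to sign, the power-sum-to-elementary change of basis), which is otherwise somewhat opaque. Both arrive at the same place with comparable rigor; your flagged ``sign and combinatorial-factor bookkeeping'' does check out exactly as you describe.
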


\begin{example}
For instance, if we consider the coefficients $a_{(1,-1)}$ of $R_{(1,-1)}(1,1;q)$, which we recall gives us \eqref{G},
\begin{equation*}
 R_{(1,-1)}(1,1;q)=\sum_{n=0}^\infty\frac{q^{n^2}}{(1-q)\cdots(1-q^n)}
=\prod_{t=0}^\infty\frac{1}{(1-q^{5t+1})(1-q^{5t+4})}
\end{equation*}
we find that our values for $c_{(1,-1)}(1,1;t)$ are
\begin{center}
\begin{tabular}{|c c|c|c|c|c|}
\hline$t\pmod{5}:$&0&1&2&3&4\\
\hline $c_{(1,-1)}(1,1;t):$&0&-1&0&0&-1\\
\hline
\end{tabular}.
\end{center}
Thus, the first few terms of the corresponding $q$-series are
\[
R_{(1,-1)}(1,1;q) = 1 + q + q^2 + q^3 + 2q^4 + 2q^5 + 3q^6 + 3q^7 + 4q^8 + 5q^9 + 6q^{10} + ....
\]
We can then calculate $a_{(1,-1)}(6)$ as follows:
\begin{equation*}
a_{(1,-1)}(6) = \widehat{F}_{6}(a_{(1,-1)}(1),...,a_{(1,-1)}(5)) - \frac{1}{6}\sum_{d|6}c_{(1,-1)}(1,1;d)\cdot d.
\end{equation*}
By the definition of $\widehat{F}_n$, we have that
\begin{equation*}
\begin{split}
\widehat{F}_{6}(x_1,...,x_5) = \frac{1}{6}x_1^6 - x_1^4x_2 + &\frac{3}{2}x_1^2x_2^2 + x_1^3x_3 - \frac{1}{3}x_2^3\\& - 2x_1x_2x_3 -
x_1^2x_4 + \frac{1}{2}x_3^2 + x_2x_4 + x_1x_5.
\end{split}
\end{equation*}
Substituting in our values for $x_1,...,x_5$, we find
\begin{equation*}
\widehat{F}_6(a_{(1,-1)}(1),...,a_{(1,-1)}(5)) = \widehat{F}_6(1,1,1,2,2) = 2 - \frac{1}{6} .
\end{equation*}
So, noting that $1,2,3$ and $6$ are the only divisors of $6$ allows us to then conclude that
\begin{equation*}
a_{(1,-1)}(6) = \frac{11}{6} + \frac{1}{6} + 1= 3.
\end{equation*}
\end{example}
\boop
The Rogers-Ramanujan identities are known to be of number-theoretic interest, standing out not only for their partition interpretations, but also for their combinatorial properties. Perhaps the most famous example of these properties is the fact that the quotient of $H$ and $G$ is the famous Rogers-Ramanujan continued fraction
\begin{equation}\label{H/G}
\frac{H(q)}{G(q)} = \cfrac{1}{1+\cfrac{q}{1+\cfrac{q^2}{1+\cfrac{q^3}
{\,\ddots}}}}.
\end{equation}
A more obvious and perhaps overlooked fact is that their product has interesting properties as well.
\begin{align}\label{GH}
G(q)\cdot H(q) &= \prod_{t=0}^\infty \frac{1}{(1-q^{5n+1})(1-q^{5n+4})}\cdot \frac{1}{(1-q^{5n+2})(1-q^{5n+3})}\\
\notag &= \prod_{t=1}^\infty \frac{(1-q^{5t})}{(1-q^t)} = \sum_{n=0}^\infty d_5(n)q^n.
\end{align}
It is a fact that the coefficients $d_5(n)$ of the product $G(q)\cdot H(q)$ for $n \equiv 4 \pmod{5}$ are multiples of 5. This is equivalent to the famous Ramanujan congruence for the partition function \cite{HW}
\begin{equation}\label{p54}
p(5n+4) \equiv 0 \pmod{5}
\end{equation}
which follows from elementary $q$-series identities of Euler and Jacobi and the fact that
\begin{equation*}
\prod_{t=1}^\infty \frac{(1-q^{5n})}{(1-q^n)} \equiv \prod_{t=1}^\infty (1-q^n)^4 \pmod{5}.
\end{equation*}

In view of \eqref{H/G} and \eqref{GH}, it is natural to ask if products of $R_\nu(a,b;q)$ for different $\nu,a,b$ form similar patterns. Searching for products of the form
\begin{equation}
\Psi_m(q):=\prod_{t=1}^\infty \frac{(1-q^{mt})}{(1-q^t)} =1 + \sum_{n=1}^\infty d_m(n)q^n
\end{equation}
we generalize the concept developed by Rogers and Ramanujan, considering ratios of the products of these functions. 
\begin{remark}
Note that $\Psi_m$ can also be viewed as the generating function counting partitions which are \emph{$m$-regular}, meaning partitions whose parts which are not multiples of $m$. Thus, we can view the coefficients $d_m(n)$ as counting $m$-regular partitions of $n$.
\end{remark}
When $m=9$, we have the well-known example discovered by Freeman Dyson, as mentioned in \cite{GOW},
\begin{equation*}
R_{(1,-1)}(2,2;q) = \prod_{t=1}^\infty \frac{(1-q^{9t})}{(1-q^t)}.
\end{equation*}
In fact, there exist such products for many other choices of $m$.

For all but finitely many $m$, there are infinitely many such representations. In particular, for even $m\geq8$, and $m\equiv 1\pmod{4}$, we have the following explicit formulae:
\vspace{0.3 cm}
\begin{theorem}\label{evens} Assuming the notation above, the following are true:\\
$(1)$ If $m \geq 8$ and $m$ is even, then we have
\begin{equation*}
\frac{R_{(1,0)}(2, \frac{m}{2}-3;q)\cdot R_{(2,-2)}(\frac{m}{2}-2,2;q)}{R_{(2,-2)}(\frac{m}{2}-3,3;q)} = \prod_{t=1}^\infty \frac{(1-q^{mt})}{(1-q^t)}.
\end{equation*}
$(2)$ If $m\equiv 1 \pmod{4}$, $m>1$, then we have
\begin{equation*}
\frac{R_{(1,-1)}(\frac{m-1}{2}-1, 1;q)\cdot R_{(2,-1)}(\frac{m-1}{4},\frac{m-1}{4};q)}{R_{(2,-1)}(\frac{m-1}{4}+1,\frac{m-1}{4}-1;q)} = \prod_{t=1}^\infty \frac{(1-q^{mt})}{(1-q^t)}.
\end{equation*}
\end{theorem}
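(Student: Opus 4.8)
The plan is to reduce each of the two identities to an equality of formal infinite products of the form $\prod_{t\ge 1}(1-q^t)^{f(t)}$, which holds if and only if the exponent functions agree. The essential input is the product expansion of the series $R_\nu$ established by Griffin, Ono and Warnaar and recorded in Section~\ref{frame}: for each admissible $\nu$ and all $a,b\ge 1$,
\[
R_\nu(a,b;q)=\prod_{t\ge 1}(1-q^t)^{c_\nu(a,b;t)},
\]
where $c_\nu(a,b;t)$ depends only on the residue of $t$ modulo a period $\kappa_\nu(a,b)$ that is explicit in each case (for instance $\kappa_\nu(a,b)=2a+2b+1$ for $\nu=(1,-1),(2,-1)$, $\kappa_\nu(a,b)=2a+2b$ for $\nu=(2,-2)$, and $\kappa_\nu(a,b)=2a+2b+2$ for $\nu=(1,0)$). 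On the target side,
\[
\Psi_m(q)=\prod_{t\ge 1}\frac{1-q^{mt}}{1-q^t}=\prod_{t\ge 1}(1-q^t)^{e_m(t)},\qquad e_m(t)=\begin{cases}0,&m\mid t,\\ -1,&m\nmid t,\end{cases}
\]
so $e_m$ is periodic with period $m$.

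First I would verify that, for the parameter values appearing in parts $(1)$ and $(2)$, each of the three factors $R_\nu$ on the left has period exactly $m$; this is a one-line substitution in each case. In $(1)$ one checks $2a+2b+2=m$ for $R_{(1,0)}(2,\tfrac{m}{2}-3;q)$ and $2a+2b=m$ for both $R_{(2,-2)}(\tfrac{m}{2}-2,2;q)$ and $R_{(2,-2)}(\tfrac{m}{2}-3,3;q)$; in $(2)$ one checks $2a+2b+1=m$ for each of $R_{(1,-1)}(\tfrac{m-1}{2}-1,1;q)$, $R_{(2,-1)}(\tfrac{m-1}{4},\tfrac{m-1}{4};q)$ and $R_{(2,-1)}(\tfrac{m-1}{4}+1,\tfrac{m-1}{4}-1;q)$. (The hypotheses ``$m\ge 8$ even'' and ``$m\equiv 1\pmod 4$, $m>1$'' are exactly what is needed for every $a$ and $b$ to be a positive integer; the boundary instances where a parameter degenerates---e.g. $b=0$ in the denominator of $(2)$ when $m=5$, which recovers $G\cdot H=\Psi_5$---are covered by the conventions of Section~\ref{frame}.) It follows that the left-hand side of each identity equals $\prod_{t\ge 1}(1-q^t)^{E(t)}$ with $E$ periodic of period $m$, where in $(1)$
\[
E(t)=c_{(1,0)}\!\big(2,\tfrac{m}{2}-3;t\big)+c_{(2,-2)}\!\big(\tfrac{m}{2}-2,2;t\big)-c_{(2,-2)}\!\big(\tfrac{m}{2}-3,3;t\big),
\]
and similarly in $(2)$ with $\nu\in\{(1,-1),(2,-1)\}$.

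It remains to prove $E(t)=e_m(t)$ for all $t$, and since both functions have period $m$ this is a finite check over the residues $0,1,\dots,m-1\pmod m$. Substituting the explicit residue descriptions of the exponents $c_\nu(a,b;t)$ from Section~\ref{frame}, the statement becomes a combinatorial identity among residue classes modulo $m$. Two observations organize the computation: at $t\equiv 0\pmod m$ every $c_\nu$ in sight vanishes, so $E(t)=0=e_m(t)$; and for $t\not\equiv 0\pmod m$ one needs the ``extra'' factors in the two numerator series to contribute $0$ to $E(t)$ away from the generic $-1$, which forces their residue patterns to be disjoint from one another where they are trivial and to cancel exactly against those of the denominator series elsewhere. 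Equivalently, after writing each $R_\nu$ as $(q;q)_\infty^{-1}$ times a product of theta functions to the modulus $m$, the claim is the theta-function identity $\Theta^{(1)}(q)\,\Theta^{(2)}(q)=(q^m;q^m)_\infty\,\Theta^{(3)}(q)$, which one proves by comparing the multiplicity of each $(1-q^t)$ on the two sides residue by residue.

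I expect the last step to be the main obstacle. The clean $\{0,-1\}$-valued residue formulas displayed in Section~\ref{Intro} cover only the boundary cases in which some parameter equals $1$, whereas the families in Theorem~\ref{evens} also feature $R_{(2,-1)}$, $R_{(1,0)}$ and $R_{(2,-2)}$ with both parameters $\ge 2$, whose exponent functions $c_\nu(a,b;t)$ are genuinely more intricate (they need not be $\{0,-1\}$-valued). One therefore has to work with the general formulas of Section~\ref{frame}, keep careful track of how the several theta factors of each series are positioned modulo $m$, and push through the resulting case analysis---likely splitting on a few residues of $m$---to confirm that the numerator patterns assemble into the denominator pattern plus the single pole of $(q;q)_\infty$. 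Once $E(t)=e_m(t)$ is verified, equality of the two products, hence of the two $q$-series, is immediate.
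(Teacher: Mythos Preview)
Your plan is correct and is, at bottom, the paper's own approach: both sides are infinite products with exponent functions periodic of period $m$, and equality follows from the explicit product formulas of Section~\ref{frame}. Where you diverge is in execution. You frame the endgame as a residue-by-residue verification of $E(t)=e_m(t)$ over $\Z/m\Z$, anticipate that the non-boundary exponent functions will be ``genuinely more intricate,'' and expect to split into cases. The paper avoids all of this by working directly with the theta-product expressions themselves rather than with the individual exponents $c_\nu(a,b;t)$. The key observation you are missing is that in each part the two $R$'s forming the ratio share the same modulus $\kappa=m$ and have $a$-parameters differing by exactly $1$; consequently, in the product formula
\[
R_\nu(a,b;q)=\frac{(q^m;q^m)_\infty^a}{(q)_\infty^a}\prod_{i=1}^a\theta(q^{i+\epsilon};q^m)\prod_{1\le i<j\le a}\theta(q^{j-i},q^{i+j+\epsilon};q^m)
\]
the denominator divides the numerator telescopically, and what survives is a single prefactor $(q^m;q^m)_\infty/(q)_\infty$ times a short run of theta factors indexed by $i=1,\dots,a$. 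After a reindexing, this run is seen to be exactly complementary to the theta factors coming from the remaining numerator term, so the combined theta product collapses to $\prod_{i=1}^{\lfloor m/2\rfloor}\theta(q^i;q^m)=(q)_\infty/(q^m;q^m)_\infty$, and one extra copy of $(q^m;q^m)_\infty/(q)_\infty$ is left standing. No case analysis on residues of $m$ is needed; the computation is uniform in $m$ and fits in a few lines for each of $(1)$ and $(2)$.
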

\vspace{0.3cm}
\begin{remark}
For $m\equiv3 \pmod{4}$, there do not seem to be any similar expressions exactly giving $\Psi_m$; however, there are infinitely many such representations giving $\Psi_m^2$, although we have been unable to find a simple closed formula for all such $m$. In fact, for general $m$, there are infinitely many representations, due to the existence of nontrivial kernels. In particular, we have the formula, for $m\equiv 3 \pmod{4}$,
\begin{equation*}
\frac{R_{(1,-1)}(\frac{m-3}{2} - 1,2) \cdot R_{(2,-1)}(\frac{m-3}{4},\frac{m-3}{4}+1)}{R_{(1,-1)}(1,\frac{m-3}{2}) \cdot R_{(2,-1)}(1,\frac{m-3}{2}) \cdot R_{(2,-1)}(\frac{m-3}{4}-1,\frac{m-3}{4}+2)}  = 1,
\end{equation*}
which is just one of the many nontrivial elements of the kernel.
\end{remark}
In view of \eqref{p54}, it is natural to ask whether the series in Theorem \ref{evens} possess similar congruences of the form $d_m(an+b)\equiv0\pmod{m}$.
\boop
\begin{theorem}\label{aps}
If $m=p^t$, $p$ prime, then there are infinitely many non-nested arithmetic progressions $an+b$ such that $d_m(an+b)\equiv0\pmod{p}$.
\end{theorem}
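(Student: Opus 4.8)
The plan is to reduce the statement modulo $p$ to a congruence for a power of Euler's product, and then run the ``positive density of congruences'' machine for modular forms modulo $p$ in the style of Ono, Ahlgren, and Treneer. First I would iterate the Frobenius congruence $(1-q^{n})^{p}\equiv 1-q^{pn}\pmod p$ to get $(1-q^{n})^{p^{t}}\equiv 1-q^{p^{t}n}\pmod p$; dividing by $\prod_{n\ge1}(1-q^{n})$ and writing $m=p^{t}$, $r=p^{t}-1$, $E(q)=\prod_{n\ge1}(1-q^{n})$, this gives
\[
\sum_{n\ge0}d_{m}(n)q^{n}=\Psi_{p^{t}}(q)=\prod_{n\ge1}\frac{1-q^{p^{t}n}}{1-q^{n}}\equiv E(q)^{r}\pmod p .
\]
With $q=\eup^{2\pi\iup\tau}$ one has $q^{r/24}E(q)^{r}=\eta(\tau)^{r}$, a holomorphic cusp form of weight $r/2$ on $\SL_{2}(\Z)$ with a multiplier of order dividing $24$. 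Replacing $\tau$ by $24\tau$ (and, when $p=2$ so $r$ is odd, working with the half-integral weight operators $T_{\ell^{2}}$ exactly as Ono does for the partition function, or with a Shimura lift) I would pass to an honest cusp form $g=\sum_{N}a_{g}(N)q^{N}$ on a fixed $\Gamma_{0}(M_{0})$ whose coefficients recover the $d_{m}(n)$ under the linear substitution $N=24n+r$; for $p\ge5$ one can take $p\nmid M_{0}$, while for $p\in\{2,3\}$ a further normalization (multiplying by a suitable Eisenstein or eta-quotient factor to clear $p$ from the level, as in Treneer's work) is needed.

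Next I would reduce $g$ modulo $p$ and invoke the theory of mod-$p$ modular forms. The reduction $\bar g$ lies in the finite-dimensional, Hecke-stable space of mod-$p$ cusp forms of its weight and level; decomposing it into generalized Hecke eigenforms and attaching to each a continuous odd semisimple representation $\rho\colon\Gal(\overline{\Rat}/\Rat)\to\GL_{2}(\overline{\F}_{p})$, whose image is finite, the set of elements of that image with trace $0$ (and the appropriate determinant, which can be pinned down by also fixing $\ell$ modulo $M_{0}p$) is a nonempty union of conjugacy classes. By the Chebotarev density theorem there is then a positive density of primes $\ell$ with $a_{g}(\ell)\equiv 0\pmod p$ and $\ell\equiv -1\pmod{M_{0}p}$; for each such $\ell$ the Hecke recursions force $a_{g}(\ell^{3}N)\equiv 0\pmod p$ whenever $\ell\nmid N$, and unwinding $N=24n+r$ produces a congruence $d_{m}(an+b)\equiv 0\pmod p$ on an arithmetic progression whose modulus $a$ is divisible by $\ell^{4}$ with $a/\ell^{4}$ dividing the fixed integer $24M_{0}$. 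Since the Chebotarev set is infinite and distinct admissible primes $\ell$ give moduli $a$ neither of which divides the other, the resulting progressions are pairwise non-nested, which is the assertion.

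For the handful of tiny pairs with $r\in\{1,2,3\}$ — namely $(p,t)=(2,1),(3,1),(2,2)$ — I would instead argue by hand: Euler's pentagonal number theorem ($r=1$), Jacobi's identity ($r=3$), or a direct expansion of $E(q)^{2}$ shows $\sum_{n}d_{m}(n)q^{n}$ is congruent mod $p$ to a series supported on the values of a single quadratic polynomial $P(k)$, and for every prime $\ell$ beyond a fixed bound the image of $P$ in $\Z/\ell\Z$ has fewer than $\ell$ elements, so some residue $b$ is omitted and $d_{m}(\ell n+b)\equiv 0\pmod p$ for all $n$; the progressions $\ell n+b$ over such $\ell$ are again non-nested. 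I expect the main obstacle to be the uniform execution of the mod-$p$ Hecke/Galois step for the small primes $p=2$ and $p=3$ — clearing $p$ from the level, controlling the determinant in the Chebotarev argument, and checking that the eigenform constituents of $\bar g$ are genuinely cuspidal rather than Eisenstein — together with the routine but necessary bookkeeping that carries ``$a_{g}(\ell^{3}N)\equiv 0$'' through the substitution $N=24n+r$ to an honest arithmetic progression for $d_{m}$ itself.
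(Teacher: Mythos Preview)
Your approach is correct and follows the same skeleton as the paper: reduce $\Psi_{p^t}$ modulo $p$ to $E(q)^{p^t-1}$, realize this as the $q$-expansion (mod $p$) of the cusp form $\eta^{p^t-1}(24z)$, and then exploit the mod-$p$ Hecke theory of modular forms to manufacture infinitely many congruence progressions. The differences are in execution rather than in idea. The paper simply quotes, as a black box, the theorem of Serre (as in Ono's CBMS monograph) that for any integer-weight cusp form $f$ with integer coefficients there exist infinitely many primes $\ell$ with $f\mid T_{\ell}\equiv 0\pmod p$; this gives $b_m(\ell n)\equiv 0\pmod p$ for $\ell\nmid n$ directly, and hence progressions of modulus $\ell^{2}$ after unwinding $N=24n+m-1$. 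You instead sketch the \emph{proof} of that black box via mod-$p$ Galois representations and Chebotarev, landing on the coarser modulus $\ell^{4}$. For $p=2$ the paper multiplies $\eta^{m-1}(24z)$ by the theta series $\theta(z)\equiv 1\pmod 2$ to pass to integer weight $2^{t-1}$ and reapplies the same black box, so no half-integral-weight machinery or Shimura lift is needed; likewise your separate treatment of the cases $r\le 3$ is unnecessary (and note that $(p,t)=(3,1)$ already has integer weight $1$, so only $p=2$ is genuinely exceptional). Both routes are valid; the paper's is shorter and yields sharper moduli, while yours is more self-contained about the underlying mechanism.
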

\begin{example}Here, we list a few specific congruences:
\begin{align*}
 d_5(121n+9) &\equiv 0 \pmod{5}\\
d_7(9n+5) &\equiv 0 \pmod{7}\\
d_9(4n +3) &\equiv 0 \pmod{3}\\
 d_{11}(49n+6) &\equiv 0 \pmod{11}\\
 d_{17}(157n+104) &\equiv 0 \pmod{17}\\
 d_{19}(243n+141) &\equiv 0 \pmod{19}\\
 d_{31}(463n+346) &\equiv 0 \pmod{31}.
\end{align*}
\end{example}
These are proven by finite calculation. By a theorem of Sturm, it is possible to check whether modular forms are identically $0\pmod{m}$ by checking only a finite number of coefficients \cite[p. 40]{CBMS}. Applying this theorem to modular forms yields these congruences, which we will consider further in Section \ref{proof3}.

The rest of the paper is organized as follows. The proof of Theorem \ref{coeffs} follows from understanding the logarithmic derivatives of the Rogers-Ramanujan series arising from the Hall-Littlewood polynomials. To obtain Theorem \ref{coeffs} from these results, we discuss universal recursion relations for infinite products in Section \ref{symm}. In Section \ref{pf12}, we recall the combinatorial structure of $q$-series in the work of Griffin, Ono, and Warnaar, and we use the results of Section \ref{symm} to prove Theorem \ref{coeffs}. Theorem \ref{evens}, giving closed formulas for the infinite product $\Psi_m$, is also obtained in Section \ref{pf12}. The theory of the congruences relies on the theory of modular forms; see \cite{CBMS} for examples. In Section \ref{proof3}, we apply well-known facts about the theory of modular forms modulo $p$ and Hecke operators to prove Theorem \ref{aps}.
\boop

\section{Symmetric Functions and Infinite Products}\label{symm}
In order to prove Theorem \ref{coeffs}, we must first determine a relationship between the exponents of an infinite product and the coefficients of the equivalent $q$-series.
\begin{theorem} \label{gencoeff}If $f = 1 + \sum \limits_{n=1}^\infty a(n)q^n = \prod \limits_{t=1}^\infty (1-q^t)^{c(t)}$, then for every integer $n \geq 1$ we have
\begin{equation}\label{an}
a(n) = \widehat{F}_n(a(1),...,a(n-1)) - \frac{1}{n}\sum \limits_{d|n}c(d)\cdot d.
\end{equation}
\end{theorem}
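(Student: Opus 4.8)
The plan is to compute $[q^n]\log f$, the coefficient of $q^n$ in the formal logarithm of $f$, in two different ways and then equate the results. Since $f = 1 + \sum_{n\ge 1}a(n)q^n$ has constant term $1$, the series $\log f$ is a well-defined element of $\Rat[[q]]$ with zero constant term, so every coefficient comparison below is legitimate as an identity of formal power series. (Equivalently one may apply $q\frac{d}{dq}$ throughout and work with the logarithmic derivative; I will work with $\log f$ directly.)

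First I would use the product representation. Applying $\log$ termwise together with $\log(1-q^t) = -\sum_{k\ge1}q^{tk}/k$ gives $\log f = -\sum_{t\ge1}c(t)\sum_{k\ge1}q^{tk}/k$. Collecting the coefficient of $q^n$ — that is, summing over pairs $(t,k)$ with $tk=n$, so that $t=d$ runs over the divisors of $n$ and $k=n/d$ — yields
\[
[q^n]\log f \;=\; -\sum_{d\mid n}\frac{c(d)}{n/d} \;=\; -\frac1n\sum_{d\mid n}c(d)\,d,
\]
which is precisely the second term on the right-hand side of \eqref{an}.

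Second I would expand the same series starting from $f = 1 + u$ with $u := \sum_{m\ge1}a(m)q^m$, using $\log(1+u) = \sum_{k\ge1}\frac{(-1)^{k-1}}{k}u^k$. The coefficient of $q^n$ in $u^k$ equals $\sum a(i_1)\cdots a(i_k)$, summed over all compositions $i_1+\dots+i_k=n$ with parts $\ge1$. Grouping these compositions according to the number $m_j$ of parts equal to $j$ — so that $\sum_j jm_j = n$ and $\sum_j m_j = k$ — there are $k!/\prod_j m_j!$ of them, each contributing $\prod_j a(j)^{m_j}$, whence
\[
[q^n]\log f \;=\; \sum_{\substack{m_1,m_2,\dots\ge0\\ \sum_j j m_j = n}} (-1)^{(\sum_j m_j)-1}\,\frac{\bigl(\sum_j m_j-1\bigr)!}{\prod_j m_j!}\,\prod_j a(j)^{m_j}.
\]
I would then isolate the unique term with $m_n=1$ (which forces all other $m_j=0$); it equals $(-1)^{0}\cdot 0!\cdot a(n) = a(n)$. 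In every remaining term $m_n=0$, so only $a(1),\dots,a(n-1)$ occur; and since $(-1)^{(\sum_j m_j)-1} = -(-1)^{\sum_j m_j}$, comparison with the definition \eqref{defF} of $\widehat F_n$ shows that the sum of the remaining terms is exactly $-\widehat F_n(a(1),\dots,a(n-1))$. Hence $[q^n]\log f = a(n) - \widehat F_n(a(1),\dots,a(n-1))$; for $n=1$ the same computation gives simply $[q^1]\log f = a(1)$, consistent with the convention $\widehat F_1 := 0$.

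Equating the two evaluations gives $a(n) - \widehat F_n(a(1),\dots,a(n-1)) = -\frac1n\sum_{d\mid n}c(d)d$, which rearranges to \eqref{an}. The only genuine obstacle is the combinatorial bookkeeping in the second computation: correctly counting the compositions of a prescribed ``shape'' by a multinomial coefficient, matching the sign $(-1)^{k-1}$ coming from $\log(1+u)$ against the sign $(-1)^{\sum m_j}$ built into $\widehat F_n$, and cleanly separating the lone $m_n=1$ contribution that produces the linear term $a(n)$. Once that identification is in place, everything else is routine formal power series manipulation.
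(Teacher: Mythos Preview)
Your proof is correct. Both your argument and the paper's hinge on computing the logarithm of $f$ in two ways, but the executions differ. The paper takes the logarithmic \emph{derivative}, obtains the Newton-type recurrence $0 = b(n) + b(n-1)a(1) + \cdots + b(1)a(n-1) + na(n)$ with $b(n)=\sum_{d\mid n}c(d)d$, and then invokes the classical symmetric-function formula expressing power sums $s_n$ in terms of elementary symmetric functions $\sigma_i$ (the Girard--Waring formula) to identify $b(n)$ with the closed form that yields $\widehat F_n$. You instead expand $\log(1+u)$ directly and collect terms by multinomial counting of compositions, which proves that closed form from scratch without any appeal to symmetric functions or Newton's identities. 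Your route is therefore more self-contained and elementary; the paper's route has the advantage of making explicit the connection between the polynomials $\widehat F_n$ and the classical $s_n$--$\sigma_i$ identities, which motivates why this section is titled ``Symmetric Functions and Infinite Products.''
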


\begin{proof}
For convenience, define 
\begin{equation}
b(n):= \sum_{d|n} c(d)d.
\end{equation}
We proceed to search for a closed formula for $b(n)$ in terms of $a(n)$. To this end, we set the two equivalent expressions for $f$ as equal and take the logarithmic derivatives of both sides. Taking $\log$ of both sides, we have
\begin{equation}\label{log}
\log(1+ \sum_{n=1}^\infty a(n)q^n) = \sum_{n=1}^\infty c(n) \log(1-q^n).
\end{equation}
We then recall the series expansion, which we use to expand the right hand side,
\[
\log(1-x) = -\sum_{n=1}^\infty \frac{x^n}{n}.
\]
Then, taking the derivative of each side and reindexing the right hand side, we have
\begin{align*}
\frac{\sum_{n=1}^\infty n \cdot a(n)q^{n-1}}{1 + \sum_{n= 1}^\infty a(n)q^n} &= -\sum_{n=1}^\infty\sum_{d|n} c(d)d q^{n-1}\\
&=  - \sum_{n=1}^\infty b(n)q^{n-1}.
\end{align*}
From this relationship, we multiply both sides by the denominator of the left hand side, which gives us
\begin{equation*}
\sum_{n=1}^\infty n \cdot a(n)q^n = (- \sum_{n=1}^\infty b(n)q^n) \cdot (1 + \sum_{n=1}^\infty a(n)q^n).
\end{equation*}
Expanding the right hand side and equating coefficients of $q^{n}$, we then observe the identity
\begin{equation}
0 = b(n) + b(n-1)a(1) + b(n-2)a(2) + ... + b(1)a(n-1) + na(n).
\end{equation}
We can also see a similar relationship when we consider the symmetric power functions $s_i$ and the elementary symmetric functions $\sigma_i$, \begin{equation}
s_i := X_1^i + X_2^i + ... + X_n^i \hspace{1.5 cm} \sigma_i = \sum \limits_{1 \leq j_1 < j_2 < ... < j_k \leq n} X_{j_1}...X_{j_k}
\end{equation}
which easily combine to give us the identity
\begin{equation}\label{sigmas}
0 = s_n - s_{n-1}\sigma_1 + s_{n-2}\sigma_2 - ... + (-1)^{n-1}s_1\sigma_{n-1} + (-1)^n\sigma_n.
\end{equation}

When we evaluate \eqref{sigmas} at $(X_1,...,X_n) = (\lambda(1,n),...,\lambda(n,n))$, where $\lambda(j,n)$ are the roots of the polynomial
\begin{equation*}
X^n + a(1) X^{n-1} + a(2)X^{n-2} + ... + a(n),
\end{equation*}
we have that $a(n) = \sigma_n$ for $n \geq 1$. Then, we must have a relationship between the $b(n)$ and the $s_n$ in order for the first identity to be true. Matching up the $a(i)$ terms in each equation, we find 
\begin{equation}
b(n) = (-1)^ns_n.
\end{equation}
We then use the fact that
\begin{equation}
s_n = n \sum \limits_{\substack{m_1,...,m_n \geq 0 \\ m_1 + 2m_2 + ... + nm_n = i}} (-1)^{m_2 + m_4 + ...} \cdot \frac{(m_1 + m_2 + ... + m_n - 1)!}{m_1!m_2! \cdots m_n!} \cdot \sigma_1^{m_1} \cdots \sigma_n^{m_n}.
\end{equation}
So, noting that $n = \sum jm_j$, we look at the exponent of the $(-1)$ in $b(n)$, and recall that exponents on $(-1)$ are calculated modulo 2, so we have that
\begin{align*}
n + (m_2 + m_4 + ...) &\equiv (m_1 + 2m_2 + ... + nm_n) + (m_2 + m_4 + ...) \\
&\equiv m_1 + m_3 + ... + m_2 + m_4 + ...\\
&\equiv m_1 + m_2 + m_3  + ... + m_n.
\end{align*}
Then, we have
\begin{equation}
b(n) = n \sum \limits_{\substack{m_1,...,m_n \geq 0 \\ m_1 + 2m_2 + ... + nm_n = n}} (-1)^{m_1 + m_2 + ... + m_n} \cdot \frac{(m_1 + m_2 + ... + m_n - 1)!}{m_1!m_2! \cdots m_n!} \cdot \sigma_1^{m_1} \cdots \sigma_n^{m_n}.
\end{equation}
In particular, note that the $m_j$ can be viewed as the multiplicities of parts corresponding to a partition of $n$, as we will see in the next section, so this sum in $b(n)$ sums over all possible partitions of $n$. We then separate one partition from the sum in $b(n)$: the partition $(n)$, where there is one part of size $n$. It is easy to see that following the formula for $b(n)$, the associated summand for this partition will be $-(n)a(n+1)$. So, we define $\widehat{F}_n(x_1,...,x_{n-1})$ as in \eqref{defF}, and note that
\begin{equation}
\widehat{F}_n(a(1),...,a(n-1)) - a(n) = \frac{1}{n}b(n).
\end{equation}
So we have that
\begin{equation}
a(n) = \widehat{F}_n(a(1),...,a(n-1)) - \frac{1}{n} \sum \limits_{d|n} c(d)\cdot d.
\end{equation}
\end{proof}

\section{Proof of Theorems 1 and 2}\label{pf12}

\subsection{Hall-Littlewood Polynomials and Hall-Littlewood $q$-series}

Let $\lambda := (\lambda_1,\lambda_2,...)$ be an integer partition, a nonincreasing sequence of nonnegative integers, with finitely many nonzero terms.
As we defined earlier, the length of $\lambda$, $l(\lambda)$, is the number of parts in $\lambda$, and the size of $\lambda$, $|\lambda|$, is the sum of the parts  in $\lambda$. We also define $m_i := m_i(\lambda)$ as the multiplicities of parts of $\lambda$ with size $i$. So, $|\lambda| = \sum_i im_i$.

In order to consider the Hall-Littlewood polynomial of a particular partition $\lambda$, we fix a positive integer $n$ such that $n \geq l(\lambda)$. We then define $x := (x_1,...,x_n)$ and $x^\lambda := x_1^{\lambda_1}\cdots x_n^{\lambda_n}$, where $\lambda_i = 0$ for $i > l(\lambda)$.

We also define $v_\lambda(q) := \prod \limits_{i=0}^n \frac{(q)_{m_i}}{(1-q)^{m_i}}$, where $m_0 := n - l(\lambda)$. Note that here, the $(q)_{m_i}$ uses Pochhammer notation, defined as
\begin{equation*}
(a)_k := (a;q)_k = \begin{cases} (1-a)(1-aq)\cdots (1-aq^{k-1}) &: k \geq 0\\ \prod \limits_0^\infty (1-aq^n) &: k =\infty \end{cases}.
\end{equation*}

The \textit{Hall-Littlewood polynomial} $P_\lambda(x;q)$ is then defined as the symmetric function
\begin{equation}
P_\lambda(x;q) := \frac{1}{v_\lambda(q)} \sum \limits_{w \in S_n} w\left(x^\lambda \cdot \prod \limits_{i<j}\frac{x_i - qx_j}{x_i-x_j}\right).
\end{equation}
where the symmetric group $S_n$ acts on $x$ by permuting the $x_i$. Since $P_\lambda(x;q)$ is a symmetric function and a homogeneous polynomial, meaning that all terms have the same degree $|\lambda|$, we can represent it in terms of the $r$-th power sum symmetric functions
\begin{equation*}
p_r := p_r(x) = x_1^r + x_2^r + ...
\end{equation*}
which form a $\Rat$-basis of the ring of symmetric functions in $n$ variables. So, we can write
\begin{equation*}
P_\lambda(x;q) = \sum a_jp_j.
\end{equation*}
In particular, when $x = (1,q,q^2,...)$, $p_j$ becomes a geometric series in $q$, so there exists an endomorphism sending $p_j \mapsto \frac{1}{1-q^j}$.

We then use the ring homomorphism 
\begin{equation}
\phi_q(p_r):=\frac{p_r}{(1-q^r)}
\end{equation}
to calculate Hall-Littlewood polynomials by transforming $P_\lambda$ into a truncated version of itself. We define the \textit{modified Hall-Littlewood polynomials} $P_\lambda^\prime(x;q)$ as the image of the set of $P_\lambda(x;q)$ under $\phi_q$, so
\[
P^\prime_\lambda := \phi_q(P_\lambda).
\]
We then have that this modified polynomial allows us to evaluate $P_\lambda$ at an infinite geometric progression, since
\begin{equation*}
P_\lambda(1,q,q^2,...;q^n) = P^\prime_\lambda(1,q,...,q^{n-1};q^n)
\end{equation*}
which we can see by
\begin{equation*}
p_r(1,q,q^2,...) = \frac{1}{1-q^r} = \frac{1-q^{nr}}{1-q^r}\frac{1}{1-q^{nr}} = \phi_{q^n}(p_r(1,q,...,q^{n-1})).
\end{equation*}

\subsection{General Framework Theorems}\label{frame}\emph{}\\
For convenience in expressing $R_\nu(a,b;q)$ as a product, we first define a modified theta function
\[
\theta(a;q):=(a;q)_{\infty}(q/a;q)_{\infty}.
\]
Note that this function uses the Pochhammer notation defined earlier.
\begin{theorem}[Griffin,Ono,Warnaar]\label{Thm_Main}
If $a$ and $b$ are positive integers and $\kappa:=2a+2b+1$, then
we have that
\begin{align}
\notag R_{(1,1)}(a,b;q) := \sum_{\substack{\la \\[1pt] \la_1\leq a}}
q^{\abs{\la}} P_{2\la}\big(1,q,&q^2,\dots;q^{2b-1}\big) \\
&=\frac{(q^{\kappa};q^{\kappa})_{\infty}^b}{(q)_{\infty}^b}\cdot 
\prod_{i=1}^b  \theta\big(q^{i+a};q^{\kappa}\big)
\prod_{1\leq i<j\leq b} 
\theta\big(q^{j-i},q^{i+j-1};q^{\kappa}\big) \notag \\
&=\frac{(q^{\kappa};q^{\kappa})_{\infty}^a}{(q)_{\infty}^a}\cdot 
\prod_{i=1}^a  \theta\big(q^{i+1};q^{\kappa}\big)
\prod_{1\leq i<j\leq a} 
\theta\big(q^{j-i},q^{i+j+1};q^{\kappa}\big), \notag
\intertext{and}
\notag R_{(1,2)}(a,b;q):=\sum_{\substack{\la \\[1pt] \la_1\leq a}}
q^{2\abs{\la}} P_{2\la}\big(1,q,&q^2,\dots;q^{2b-1}\big) \\
&=\frac{(q^{\kappa};q^{\kappa})_{\infty}^b}{(q)_{\infty}^b}\cdot 
\prod_{i=1}^b  \theta\big(q^i;q^{\kappa}\big)
\prod_{1\leq i<j\leq b} 
\theta\big(q^{j-i},q^{i+j};q^{\kappa}\big) \notag \\
&=\frac{(q^{\kappa};q^{\kappa})_{\infty}^a}{(q)_{\infty}^a}\cdot 
\prod_{i=1}^a  \theta\big(q^i;q^{\kappa}\big)
\prod_{1\leq i<j\leq a} 
\theta\big(q^{j-i},q^{i+j};q^{\kappa}\big). \notag
\end{align}
\end{theorem}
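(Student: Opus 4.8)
The plan is to prove this along the Hall--Littlewood lines of Griffin, Ono and Warnaar. The idea is that each of the two sums is the $\phi$-specialization of a Rogers--Ramanujan-type identity for Hall--Littlewood polynomials, in which the truncation $\lambda_1\le a$ plays the role of a level; once that identity is in hand, passing to the alphabet $(1,q,q^2,\dots)$ via the ring homomorphism of Section~3.1 and simplifying by the Jacobi triple product should produce the stated theta products.

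Concretely, I would first record the unrestricted Littlewood-type evaluation $\sum_\lambda q^{c\abs{\lambda}}P_{2\lambda}(x;t)=\prod(\cdots)$ (a weighted Hall--Littlewood analogue of the classical Littlewood identity) as the $a=\infty$ base case, and then establish the \emph{restricted} identities
\[
\sum_{\substack{\lambda\\ \lambda_1\le a}} q^{c\abs{\lambda}}\,P_{2\lambda}(x;t)\;=\;(\text{a theta-type product built from the positive roots of an affine root system}),
\]
for the two pairs $(c,t)$ that give $R_{(1,1)}$ and $R_{(1,2)}$. The cleanest route to these that I know is through the affine Macdonald identities --- equivalently the Weyl--Kac denominator formula --- for $C_N^{(1)}$ and $A_{2N}^{(2)}$, combined with a Hall--Littlewood Bailey-type insertion lemma that builds in the cutoff $\lambda_1\le a$; this is the Warnaar / Bartlett--Warnaar circle of ideas, and the alternating sum over the affine Weyl group is exactly what collapses the left-hand side. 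Then I would apply the compatibility $p_r(1,q,q^2,\dots)=\phi_{q^{2b-1}}\!\big(p_r(1,q,\dots,q^{2b-2})\big)$ from Section~3.1 so that the left-hand side becomes $R_{(1,1)}(a,b;q)$ (respectively $R_{(1,2)}(a,b;q)$) with specialization parameter $q^{2b-1}$; on the right, the imaginary-root part becomes a power of $(q^\kappa;q^\kappa)_\infty/(q)_\infty$ with $\kappa=2a+2b+1$, and, after re-summing the real-root orbits through the Jacobi triple product, each orbit contributes one modified theta factor $\theta(q^\bullet;q^\kappa)$. Tracking which residues mod $\kappa$ arise is precisely what produces the exponents $i+a,\ j-i,\ i+j-1$ (resp.\ $i,\ j-i,\ i+j$) and the multiplicity $b$ in the statement.

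The two displayed products for a single $R$ should come out as the two ways of reading off the same (essentially modular) object --- organizing the affine-root data through $b$ versus through $a$ --- the equality being a level--rank-type symmetry of the underlying denominator identity. I expect the genuine obstacle to be the restricted Hall--Littlewood identity: producing it in the needed generality rests on the full strength of the affine Macdonald identities together with the Hall--Littlewood Bailey lemma, and on choosing the weight and the cutoff so that they match the modulus $\kappa$. By contrast, the $\phi$-specialization and the theta-function bookkeeping, though delicate, are essentially mechanical once that identity is available.
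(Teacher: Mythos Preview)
Your sketch is a reasonable high-level outline of the Griffin--Ono--Warnaar approach, but note that this paper does \emph{not} prove Theorem~\ref{Thm_Main} at all: it is quoted verbatim from \cite{GOW} as one of the ``General Framework Theorems'' in Section~\ref{frame} and used as a black box (together with Theorems~\ref{Thm_Main2} and~\ref{Thm_Main3}) to read off the exponents $c_\nu(a,b;t)$ and to derive Theorems~\ref{coeffs} and~\ref{evens}. There is therefore no proof in the paper to compare your proposal against.

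If your intent is to reproduce the proof from \cite{GOW}, then your outline points in the right direction---the restricted Hall--Littlewood sums are handled there via Bartlett--Warnaar-type identities and affine denominator formulas, with the $\phi$-specialization of Section~3.1 bridging to the infinite alphabet---but what you have written is a plan rather than a proof: the ``restricted Hall--Littlewood identity'' you flag as the genuine obstacle is indeed the entire content of the theorem, and establishing it requires the full machinery of \cite{GOW} (or the earlier Andrews--Schilling--Warnaar and Bartlett--Warnaar papers), not merely an invocation of it. For the purposes of \emph{this} paper, simply citing \cite{GOW} is what is done and what suffices.
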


We also have the even modulus analogs of the two formulas in Theorem \ref{Thm_Main}, respectively.
\begin{theorem}[Griffin, Ono,Warnaar]\label{Thm_Main2}
If $a$ and $b$ are positive integers and $\kappa:=2a+2b+2$, then we have that
\begin{align}\label{RRthird}
\notag R_{(1,0)}(a,b;q):=&\sum_{\substack{\la \\[1pt] \la_1\leq a}}
q^{\abs{\la}} P_{2\la}\big(1,q,q^2,\dots;q^{2b}\big) \\
&=\frac{(q^2;q^2)_{\infty}(q^{\kappa/2};q^{\kappa/2})_{\infty}
(q^{\kappa};q^{\kappa})_{\infty}^{b-1}}{(q)_{\infty}^{b+1}}  
\cdot 
\prod_{i=1}^b  \theta\big(q^i;q^{\kappa/2}\big)
\prod_{1\leq i<j\leq b} \theta\big(q^{j-i},q^{i+j};q^{\kappa}\big)
\notag \\
&=\frac{(q^{\kappa};q^{\kappa})_{\infty}^a}{(q)_{\infty}^a} 
\cdot \prod_{i=1}^a  \theta\big(q^{i+1};q^{\kappa}\big)
\prod_{1\leq i<j\leq a} 
\theta\big(q^{j-i},q^{i+j+1};q^{\kappa}\big). \notag
\end{align}
\end{theorem}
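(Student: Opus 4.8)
The plan is to prove Theorem~\ref{Thm_Main2} exactly as Griffin, Ono, and Warnaar do in \cite{GOW}, where it appears as one of the central identities; so, strictly speaking, the proof we would give is a pointer to \cite{GOW}, and what follows is a description of the route taken there. The argument has three stages, and essentially all of the substance is concentrated in the first one.

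First I would establish a \emph{bounded Hall--Littlewood identity}: a closed evaluation of the finite-alphabet sum $\sum_{\la_1\le a} z^{\abs{\la}}\,P_{2\la}(x_1,\dots,x_N;q)$, the sum running over partitions with parts of even size and first part at most $2a$. This is the Hall--Littlewood, bounded refinement of the classical Littlewood identity $\sum_{\la}s_{2\la}(x)=\prod_i(1-x_i^2)^{-1}\prod_{i<j}(1-x_ix_j)^{-1}$, and it is obtained by the $q$-series machinery of Andrews, Bressoud, Milne, Stembridge, and Warnaar --- essentially a Hall--Littlewood analogue of the Bailey chain, built from iterated $q$-hypergeometric transformations --- with the truncation at $a$ encoding the rank of the affine root system that will surface in Stage~3. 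The outcome is a multiple theta-like sum whose summand already has the shape of an affine Weyl denominator.

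Next I would specialize to the geometric progression $x=(1,q,q^2,\dots)$. Using the homomorphism $\phi_q$ and the identity $P_\la(1,q,q^2,\dots;t)=P'_\la(1,q,\dots;t)$ recalled in Section~3.1, Hall--Littlewood polynomials at such an alphabet have an explicit product form; this is where the modulus $q^{2b}$ enters, and where the prefactor built from $(q)_\infty$, $(q^2;q^2)_\infty$, $(q^{\kappa/2};q^{\kappa/2})_\infty$, and $(q^\kappa;q^\kappa)_\infty$ displayed in the theorem assembles itself out of the normalizations $v_\la$ and the prefactors picked up along the chain. Finally I would match the resulting specialized multiple sum with the Weyl--Kac denominator formula --- a Macdonald identity --- for the affine root system attached to the even modulus $\kappa=2a+2b+2$; its product side is exactly the first product in the statement. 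The second equality, between the $b$-indexed and the $a$-indexed products, I would deduce separately, either from the $a\leftrightarrow b$ symmetry already visible in the bounded sum of Stage~1 or by a direct theta-function manipulation using the Jacobi triple product and the reflection $\theta(q^c;q^\kappa)=\theta(q^{\kappa-c};q^\kappa)$.

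The hard part is Stage~1: arranging the Bailey-type iteration so that it terminates precisely at the bound $a$ (respectively $b$) while faithfully tracking the weight $q^{\abs{\la}}$, and delivering the summand in exactly the normal form that the Macdonald identity of Stage~3 can recognize. Everything after that is bookkeeping. This is also, in effect, the reason that clean product formulas are available only for the four distinguished pairs $\nu\in\{(1,-1),(2,-1),(1,0),(2,-2)\}$, and --- for the applications in the present paper --- are simplest when $a$ or $b$ equals $1$.
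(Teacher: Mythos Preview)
Your proposal is correct, and in fact goes well beyond what the paper does. In the paper, Theorem~\ref{Thm_Main2} is not proved at all: it is simply stated as a result of Griffin, Ono, and Warnaar and cited from \cite{GOW}, then used as a black box in the proofs of Theorems~\ref{coeffs} and~\ref{evens}. Your observation that ``strictly speaking, the proof we would give is a pointer to \cite{GOW}'' already matches the paper's treatment exactly; the three-stage sketch you provide (bounded Hall--Littlewood identity via Bailey-type iteration, principal specialization, matching with a Macdonald/Weyl--Kac denominator formula) is an accurate outline of the method in \cite{GOW} and is more than the present paper attempts.
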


\begin{theorem}[Griffin, Ono, Warnaar]\label{Thm_Main3}
If $a$ and $b$ are positive integers such that $b\geq 2$, and $\kappa:=2a+2b$, then we have that
\begin{align}
\notag R_{(2,-2)}(a,b;q):=\sum_{\substack{\la \\[1pt] \la_1\leq a}}
q^{2\abs{\la}} P_{2\la}\big(1,&q,q^2,\dots;q^{2b-2}\big) \\
&=\frac{(q^{\kappa};q^{\kappa})_{\infty}^b}
{(q^2;q^2)_{\infty}(q)_{\infty}^{b-1}} 
\cdot \prod_{1\leq i<j\leq b} \theta\big(q^{j-i},q^{i+j-1};q^{\kappa}\big) \notag \\
&=\frac{(q^{\kappa};q^{\kappa})_{\infty}^a}{(q)_{\infty}^a} 
\cdot \prod_{i=1}^a  \theta\big(q^i;q^{\kappa}\big)
\prod_{1\leq i<j\leq a} 
\theta\big(q^{j-i},q^{i+j};q^{\kappa}\big). \notag
\end{align}
\end{theorem}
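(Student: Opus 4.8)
The plan is to follow the Hall--Littlewood machinery of Griffin, Ono, and Warnaar, of which this identity is the ``boundary'' member of the family (note the degenerate modulus $2b-2$ and the restriction $b\geq 2$). The first move is to reduce the infinite-variable sum to a manipulable finite-variable symmetric-function identity. Using the principal-specialization compatibility already recorded above, $P_\la(1,q,q^2,\dots;q^n)=P'_\la(1,q,\dots,q^{n-1};q^n)$, together with Macdonald's product formula for the principal specialization of a Hall--Littlewood polynomial, I would rewrite each summand $q^{2\abs{\la}}P_{2\la}(1,q,q^2,\dots;q^{2b-2})$ in closed product form and pass to the limit in the number of variables.

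The engine of the proof is a \emph{bounded Littlewood identity}: the classical Littlewood expansion $\sum_{\la}P_{2\la}(x;q)=\prod_i(1-x_i)^{-1}\prod_{i<j}(1-x_ix_j)^{-1}$ must be replaced by its truncation to partitions with $\la_1\leq a$, which evaluates as a sum over a finite Weyl group (equivalently, a Pfaffian/determinant). I would take this finite identity, apply the principal specialization $x=(1,q,q^2,\dots)$ weighted by $q^{2\abs{\la}}$, and observe that the result collapses into a bilateral lattice sum indexed by the (co)root lattice of an affine root system of type $A_{2n}^{(2)}$ at the level dictated by $b$. Macdonald's $\eta$-function (Weyl--Kac denominator) identity then evaluates this lattice sum as a product of theta functions $\theta(\cdot\,;q^{\kappa})$, and a careful accounting of the resulting factors should reproduce the prefactor $(q^{\kappa};q^{\kappa})_\infty^b/((q^2;q^2)_\infty(q)_\infty^{b-1})$ and the double product $\prod_{1\leq i<j\leq b}\theta(q^{j-i},q^{i+j-1};q^{\kappa})$. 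The absence here of a $\prod_{i=1}^b\theta(q^i;\cdot)$ factor, and the extra $1/(q^2;q^2)_\infty$, should emerge precisely from the degeneration at $d=-2$, and I would derive this line as a limit of the nondegenerate formula in Theorem~\ref{Thm_Main2} rather than repeating the full computation.

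Finally, the two displayed product expressions must be shown to agree. Rather than prove the ``$a$-side'' from scratch, I would obtain it as a rank--level dual of the ``$b$-side'': swapping the roles of the bound $a$ and the level controlled by $b$ corresponds to a symmetry of the underlying affine system, and the equality of the two theta-quotients is then an instance of Macdonald's identity (a genuine theta-function transformation) rather than a combinatorial fact. I expect the main obstacle to be exactly this bounded Littlewood identity together with the precise matching of theta factors: verifying that the truncated sum specializes to the correct Weyl--Kac numerator, and tracking the boundary corrections ($b\geq2$, the lost product, the $(q^2;q^2)_\infty$) so that both products come out in the stated normalized form. The symmetric-function and $\eta$-identity inputs are classical; the bookkeeping of the specialization and the level--rank duality is where the real work lies.
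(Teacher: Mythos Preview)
The paper does not actually prove Theorem~\ref{Thm_Main3}. It is quoted verbatim from Griffin, Ono, and Warnaar~\cite{GOW} as one of three ``framework theorems'' (Theorems~\ref{Thm_Main}, \ref{Thm_Main2}, \ref{Thm_Main3}) and is used as a black box in the proofs of Theorems~\ref{coeffs} and~\ref{evens}. So there is no proof in this paper for you to be compared against.

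That said, your sketch is a faithful outline of the method used in~\cite{GOW} itself: the reduction via $P_\la(1,q,\dots;q^n)=P'_\la(1,q,\dots,q^{n-1};q^n)$, a bounded Littlewood identity for $\sum_{\la_1\leq a} P_{2\la}$, principal specialization to a lattice sum, evaluation by a Weyl--Kac/Macdonald denominator identity for the relevant affine root system, and level--rank duality to pass between the $a$- and $b$-indexed product forms. Your diagnosis of where the work lies (the bounded Littlewood step and the theta bookkeeping under the degenerate $d=-2$ specialization) is accurate. None of this is needed for the present paper, however, which simply invokes the result.
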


\subsection{Proof of Theorem \ref{coeffs}} 
From the above theorems, we know that we can write $R_\nu(a,b;q)$ as a product of terms $(1-q^t)^{c_\nu(a,b;t)}$. We then apply Theorem 2.1 to determine the coefficients of $R_\nu(a,b;q)$.
\begin{proof} [Proof of Theorem \ref{coeffs}] For each of the identities in Theorems \ref{Thm_Main}, \ref{Thm_Main2}, and \ref{Thm_Main3}, we have a distinct formula
\begin{equation*}
R_\nu(a,b;q) = 1+ \sum_{n=1}^\infty a_\nu(n)q^n = \prod_{t=1}^\infty (1-q^t)^{c_\nu(a,b;t)}. 
\end{equation*}
Thus, applying Theorem \ref{gencoeff}, we determine that
\begin{equation}
a_\nu(n) = \widehat{F}_n((a(1),...,a(n-1)) - \frac{1}{n} \sum \limits_{d|n} c_\nu(a,b;d)\cdot d.
\end{equation}
\end{proof}

\subsection{Proof of Theorem \ref{evens}}\label{proof2}
The proof of Theorem \ref{evens} relies on the set of three framework theorems introduced in Section 3.2. Note that in the framework theorems, there are two possible expressions for the product sides; we will choose either expression, based on convenience.
\begin{proof} [Proof of Theorem \ref{evens} (1)]
By Theorem 3.3, we have that for $m = 2a + 2b$
\begin{equation*}
R_{(2,-2)}(a,b;q) = \frac{(q^m;q^m)^a_\infty}{(q)^a_\infty}\cdot\prod_{i=1}^a \theta(q^i;q^m)\prod_{1\leq i<j\leq a} \theta(q^{j-i},q^{i+j};q^m).
\end{equation*}
Considering then the fraction below, we find that the denominator cancels out, and we are left with
\begin{equation*}
\frac{R_{(2,-2)}(\frac{m}{2}-2,2;q)}{R_{(2,-2)}(\frac{m}{2}-3,3;q)}=
\frac{(q^m;q^m)_\infty}{(q)_\infty} \theta(q^{\frac{m}{2}-2};q^m) \prod_{i=1}^{\frac{m}{2}-3} \theta(q^{\frac{m}{2}-2+i};q^m)\theta(q^{\frac{m}{2}-2-i};q^m).
\end{equation*} 
Splitting up the theta notation and rearranging the indices, we find that the right hand side of this equation can be rewritten as
\begin{align*}
 \frac{(q^m;q^m)_\infty}{(q)_\infty} &\cdot\prod_{i=1}^{4} \theta(q^i;q^m)\prod_{i=5}^{\frac{m}{2}}\theta(q^i;q^m)^2\\
&= \prod_{i=5}^{\frac{m}{2}}\theta(q^i;q^m).
\end{align*} 
Then, from Theorem 3.2, we have that for $m = 2a + 2b + 2$,
\begin{equation*}
R_{(1,0)}(a,b;q) = \frac{(q^m;q^m)^a_\infty}{(q)^a_\infty}\cdot\prod_{i=1}^a \theta(q^{i+1};q^m)\prod_{1\leq i<j\leq a} \theta(q^{j-i},q^{i+j+1};q^m).
\end{equation*}
So then we have that
\begin{equation*}
R_{(1,0)}\left(2,\frac{m}{2}-3;q\right) = \frac{(q^m;q^m)^2_\infty}{(q)^2_\infty}\prod_{i=1}^4 \theta(q^i;q^m).
\end{equation*}
Therefore, 
\begin{align*}
\frac{R_{(1,0)}(2, \frac{m}{2}-3;q)\cdot R_{(2,-2)}(\frac{m}{2}-2,2;q)}{R_{(2,-2)}(\frac{m}{2}-3,3;q)} &= \prod_{i=5}^{\frac{m}{2}}\theta(q^i;q^m)\cdot \frac{(q^m;q^m)^2_\infty}{(q)^2_\infty}\prod_{i=1}^4 \theta(q^i;q^m)\\
&= \frac{(q^m;q^m)_\infty}{(q)_\infty}\\
&= \prod_{t=1}^\infty \frac{(1-q^{mt})}{(1-q^t)}.
\end{align*}
\end{proof}
The proof of Theorem \ref{evens} (2) relies on the same basic method as the above proof, using the two identities from Theorem 3.1 in place of those from Theorems 3.2 and 3.3.
\begin{proof} [Proof of Theorem \ref{evens} (2)]
By Theorem 3.1, we have that for $m = 2a + 2b - 1$
\begin{equation*}
R_{(2,-1)}(a,b;q) = \frac{(q^m;q^m)^a_\infty}{(q)^a_\infty}\cdot\prod_{i=1}^a \theta(q^i;q^m)\prod_{1\leq i<j\leq a} \theta(q^{j-i},q^{i+j};q^m).
\end{equation*}
Considering then the fraction below, we find that the denominator cancels out, and we are left with
\begin{equation*}
\frac{R_{(2,-1)}(\frac{m-1}{4}+1,\frac{m-1}{4}-1;q)}{R_{(2,-1)}(\frac{m-1}{4},\frac{m-1}{4};q)}=
\frac{(q^m;q^m)_\infty}{(q)_\infty} \theta(q^{\frac{m-1}{4}+1};q^m) \prod_{i=1}^\frac{m-1}{4} \theta(q^{\frac{m-1}{4}+1+i};q^m)\theta(q^{\frac{m-1}{4}+1-i};q^m).
\end{equation*} 
Reindexing, we can rewrite the right hand side of this equation as
\begin{align*}
\frac{(q^m;q^m)_\infty}{(q)_\infty} \theta(q^{\frac{m-1}{4}+1};q^m) &\cdot\prod_{i=1}^\frac{m-1}{4} \theta(q^i;q^m) \prod_{i=\frac{m-1}{4}  + 2}^\frac{m+1}{2}\theta(q^i;q^m)\\
&= \frac{(q^m;q^m)_\infty}{(q)_\infty} \prod_{i=1}^\frac{m+1}{2}\theta(q^i;q^m)\\
&=  \theta(q^\frac{m-1}{2};q^m).
\end{align*}
Then, from Theorem 3.1, we have that for $m = 2a + 2b -1$,
\begin{equation*}
R_{(1,-1)}(a,b;q) = \frac{(q^m;q^m)^b_\infty}{(q)^b_\infty}\cdot\prod_{i=1}^b \theta(q^{i+a};q^m)\prod_{1\leq i<j\leq b} \theta(q^{j-i},q^{i+j-1};q^m).
\end{equation*}
We then find that
\begin{align*}
R_{(1,-1)}\left(\frac{m-1}{2}-1,1;q\right) &= \frac{(q^m;q^m)_\infty}{(q)_\infty} \cdot \theta(q^{\frac{m-1}{2}};q^m).
\end{align*}
Then, we have that
\begin{align*}
\frac{R_{(1,-1)}(\frac{m-1}{2}-1, 1;q)\cdot R_{(2,-1)}(\frac{m-1}{4},\frac{m-1}{4};q)}{R_{(2,-1)}(\frac{m-1}{4}+1,\frac{m-1}{4}-1;q)} &= \frac{(q^m;q^m)_\infty}{(q)_\infty} \cdot \theta(q^{\frac{m-1}{2}};q^m) \cdot \frac{1}{\theta(q^\frac{m-1}{2};q^m)}\\
&= \frac{(q^m;q^m)_\infty}{(q)_\infty}\\
&= \prod_{t=1}^\infty \frac{(1-q^{mt})}{(1-q^t)}.
\end{align*}
\end{proof}

\section{Proof of Theorem \ref{aps}}\label{proof3}
\subsection{Modular Forms}
Using modular forms, we can prove the existence of an infinite number of congruences resembling those of Ramanujan and explicitly calculate several examples. We first recall that \textit{Dedekind's eta-function} is the infinite product
\begin{equation}
\eta(z):=q^{1/24}\prod_{t=1}^{\infty}{(1-q^t)}
\end{equation}
where $q:=e^{2\pi i z}$. Then, it is clear that
\begin{align*}
\eta^{m-1}(24z)&=q^{m-1}\prod_{t=1}^\infty(1-q^{24t})^{m-1}\\
&=\sum_{n=0}^\infty d_m(n)q^{24n+m-1}.
\end{align*}
If $N$ is a positive integer, then we define $\Gamma_0(N)$ as the congruence subgroup
\begin{equation*}
\Gamma_0(N) := \left\{\begin{pmatrix} a&b\\c&d\end{pmatrix}\in SL_2(\Z) : c\equiv 0 \pmod{N}\right\}.
\end{equation*}
Furthermore, we recall a theorem from \cite[p.18]{CBMS}, regarding such eta-products:

\begin{theorem}
If $f(z)=\prod_{\delta|N}\eta(\delta z)^{r_\delta}$ is of integer weight $k= \frac{1}{2}\sum_{\delta | N} r_\delta$ and  satisfies
\begin{align*}
\sum_{\delta|N}\delta r_\delta&\equiv0\pmod{24}
\intertext{and}
 \sum_{\delta|N}\frac{N}{\delta}r_\delta&\equiv0\pmod{24},
\intertext{then $f(z)$ satisfies} 
\hspace{0.5cm} f\left(\frac{az+b}{cz+d}\right)&=\chi(d)(cz+d)^kf(z)
\end{align*}
 for every $\begin{pmatrix}a&b\\c&d\end{pmatrix}\in \Gamma_0(N)$. Note that we define $\chi(d):=\left(\frac{(-1)^ks}{d}\right)$, where $s :=\prod_{\delta|N}\delta^{r_{\delta}}$. 
\end{theorem}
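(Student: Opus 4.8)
The final statement is the classical theorem on $\eta$-quotients, and the plan is to derive it from the transformation law of Dedekind's $\eta$-function itself rather than from a generating set of $\Gamma_0(N)$. Fix $\gamma=\begin{pmatrix}a&b\\c&d\end{pmatrix}\in\Gamma_0(N)$. Replacing $\gamma$ by $-\gamma$ if necessary (which changes neither $\gamma z$ nor, after accounting for the sign $(-1)^k$, the claimed identity) we may assume $c\ge 0$, and the case $c=0$ is immediate: then $\gamma$ acts as a translation $z\mapsto z+b$, and from $\eta(z+1)=e^{\pi i/12}\eta(z)$ we get $f(\gamma z)=\exp\!\big(\tfrac{\pi i b}{12}\sum_{\delta\mid N}\delta r_\delta\big)f(z)=f(z)$, because the first congruence hypothesis forces $\sum_{\delta\mid N}\delta r_\delta\equiv 0\pmod{24}$, hence $\pmod{12}$; this matches $\chi(1)(cz+d)^k=1$. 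So assume $c>0$.

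The engine is the matrix factorization
\[
\begin{pmatrix}a&b\\c&d\end{pmatrix}\begin{pmatrix}\delta&0\\0&1\end{pmatrix}=\begin{pmatrix}\delta&0\\0&1\end{pmatrix}\begin{pmatrix}a&\delta b\\ c/\delta&d\end{pmatrix},
\]
which is legitimate since $\delta\mid N\mid c$, so that $\gamma_\delta:=\begin{pmatrix}a&\delta b\\ c/\delta&d\end{pmatrix}\in\mathrm{SL}_2(\Z)$ and $\delta\cdot\gamma z=\gamma_\delta\cdot(\delta z)$, with automorphy factor $(c/\delta)(\delta z)+d=cz+d$. Feeding this into the general transformation formula $\eta\!\big(\tfrac{A\tau+B}{C\tau+D}\big)=\varepsilon(A,B,C,D)(C\tau+D)^{1/2}\eta(\tau)$ for $C>0$, where $\varepsilon(A,B,C,D)=\exp\!\big(\pi i(\tfrac{A+D}{12C}-s(D,C)-\tfrac14)\big)$ and $s(D,C)$ is the Dedekind sum, and taking the product over $\delta\mid N$ with exponents $r_\delta$, one finds that the automorphy factors multiply to $(cz+d)^{(\sum r_\delta)/2}=(cz+d)^{k}$ (unambiguously, as $\sum r_\delta=2k$ is even), reducing the theorem to the phase identity
\[
\prod_{\delta\mid N}\varepsilon\!\big(a,\delta b,c/\delta,d\big)^{r_\delta}=\Big(\tfrac{(-1)^k s}{d}\Big),\qquad s:=\prod_{\delta\mid N}\delta^{r_\delta}.
\]

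This phase identity is the crux, and it is where both congruence conditions are used. Expanding the left side, the contributions $\exp\!\big(\pi i r_\delta\tfrac{a+d}{12}\cdot\tfrac{\delta}{c}\big)$ assemble into a single phase controlled by $\sum_\delta\delta r_\delta$, which is annihilated modulo the relevant amount by the first hypothesis; the leftover Dedekind-sum phases $\exp(-\pi i r_\delta s(d,c/\delta))$ are transformed by Dedekind reciprocity into phases built from $s(c/\delta,d)$ (same lower entry $d$) plus rational terms whose $c$-dependence is now governed by $\sum_\delta(N/\delta)r_\delta$ and hence killed by the second hypothesis. What survives, together with the residual $\exp(-\tfrac{\pi i k}{2})$ coming from the $-\tfrac14$ in each $\varepsilon$, is a Gauss-sum-type quantity in $d$; the standard evaluation of such sums (equivalently, the Petersson--Rademacher formula for the $\eta$-multiplier, or reduction to prime $c$ using that both sides are multiplicative in $c$ followed by quadratic reciprocity) identifies it with the Jacobi symbol $\big(\tfrac{(-1)^k s}{d}\big)$. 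I expect the bookkeeping of these phases---keeping careful track of the Dedekind sums, the sign $(-1)^k$, and verifying that the two divisibility-by-$24$ hypotheses are exactly strong enough to clear every fractional exponent---to be the only genuine difficulty; everything else is a mechanical application of known formulas.
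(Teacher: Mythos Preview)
The paper does not actually prove this statement: it is quoted verbatim as a background result from Ono's CBMS monograph \cite[p.~18]{CBMS}, with no argument given. So there is nothing in the paper to compare your proposal against.

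That said, your outline is the standard route to this classical theorem (essentially the Newman--Ligozat argument). The matrix factorization $\gamma\begin{pmatrix}\delta&0\\0&1\end{pmatrix}=\begin{pmatrix}\delta&0\\0&1\end{pmatrix}\gamma_\delta$ is exactly the right device, and you have correctly located where each of the two $\pmod{24}$ hypotheses enters: the first kills the $(a+d)\sum\delta r_\delta/(12c)$ phase, and the second is what clears the rational debris after Dedekind reciprocity. The one place your sketch is genuinely thin is the last step, identifying the surviving product of $\varepsilon$-phases with the Jacobi symbol $\big(\tfrac{(-1)^k s}{d}\big)$. Doing this via Dedekind sums and reciprocity, as you propose first, is correct but involves delicate bookkeeping of fractional parts and signs that your write-up does not attempt; the parenthetical alternative you mention---starting instead from the Petersson form of the $\eta$-multiplier, which is already expressed through Jacobi symbols---is much cleaner, since then the product over $\delta$ is a product of Jacobi symbols with common denominator $d$ and the multiplicativity of the symbol assembles them directly into $\big(\tfrac{s}{d}\big)$ times the requisite sign. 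If you want a self-contained proof rather than a sketch, I would recommend committing to that second route.
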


From Theorem 1.65 in \cite[p. 18]{CBMS}, we have the following result, which determines that these functions are cusp forms of weight $\frac{m-1}{2}$ with respect to $\Gamma_0(576)$.

\begin{proposition} \label{cusp}If $m \geq 2$, then $\eta^{m-1}(24z)\in S_{\frac{m-1}{2}}(\Gamma_0(576),\chi)$. In particular, this form has integer weight if and only if $m$ is odd.
\end{proposition}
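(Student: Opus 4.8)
The plan is to deduce Proposition \ref{cusp} directly from the eta-quotient machinery recalled just above, namely Theorem 1.64 (the transformation law for $\prod_{\delta\mid N}\eta(\delta z)^{r_\delta}$) together with Theorem 1.65 of \cite{CBMS}, which supplies the order-of-vanishing condition at the cusps that upgrades a modular form to a cusp form. We apply this with $N=576$, $\delta=24$, $r_{24}=m-1$, and all other $r_\delta=0$. First I would record the trivial arithmetic checks: the weight is $k=\tfrac12\sum_{\delta\mid N}r_\delta=\tfrac{m-1}{2}$, which is an integer precisely when $m$ is odd; the first congruence is $\sum_{\delta\mid N}\delta r_\delta=24(m-1)\equiv0\pmod{24}$; and the second is $\sum_{\delta\mid N}\tfrac{N}{\delta}r_\delta=\tfrac{576}{24}(m-1)=24(m-1)\equiv0\pmod{24}$. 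Hence Theorem 1.64 applies and $\eta^{m-1}(24z)$ is a weakly holomorphic modular form of weight $\tfrac{m-1}{2}$ on $\Gamma_0(576)$ with the stated character $\chi(d)=\left(\tfrac{(-1)^k s}{d}\right)$, $s=24^{\,m-1}$.

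Next I would verify holomorphy and vanishing at every cusp of $\Gamma_0(576)$. By Theorem 1.65 of \cite{CBMS}, the order of vanishing of $\prod_{\delta\mid N}\eta(\delta z)^{r_\delta}$ at the cusp $c/d$ (with $d\mid N$, $\gcd(c,d)=1$) is proportional to
\begin{equation*}
\sum_{\delta\mid N}\frac{\gcd(d,\delta)^2\, r_\delta}{\delta},
\end{equation*}
and in our one-term case this equals $\dfrac{\gcd(d,24)^2 (m-1)}{24}$, which is strictly positive for every $d\mid 576$ since $m\geq 2$ and $\gcd(d,24)\geq1$. Therefore $\eta^{m-1}(24z)$ is holomorphic and actually vanishes at each cusp, so it lies in $S_{(m-1)/2}(\Gamma_0(576),\chi)$. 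Finally I would pin down the character: with $k=\tfrac{m-1}{2}$ and $s=24^{m-1}$, one has $s=(24^{(m-1)/2})^2$ a perfect square when $m$ is odd, so $\left(\tfrac{(-1)^k s}{d}\right)=\left(\tfrac{(-1)^k}{d}\right)$ reduces to the quadratic character attached to $(-1)^{(m-1)/2}$; for the purposes of the theorem it suffices to note it is the character $\chi$ produced by Theorem 1.64, and that when $\tfrac{m-1}{2}$ is even this is the trivial character so the form is on $\Gamma_0(576)$ with trivial nebentypus. I would also remark that when $m$ is even the weight is a half-integer and the same argument places the form in the corresponding space of half-integral-weight cusp forms on $\Gamma_0(576)$, which is all that is needed later.

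The routine parts here are genuinely routine — they are two divisibility checks and one positivity check — so the only real subtlety is bookkeeping: making sure that the level $576$ (rather than the smaller $24$) is used so that \emph{all} the eta-quotient hypotheses are met, and correctly reading off the character $\chi$ from Theorem 1.64 rather than guessing it. The ``main obstacle,'' such as it is, is simply citing the precise cusp-order formula of Theorem 1.65 and confirming strict positivity at every cusp divisor $d\mid 576$; once that is in hand the proposition is immediate. No new ideas beyond \cite{CBMS} are required, and the statement about integrality of the weight is just the observation that $\tfrac{m-1}{2}\in\Z \iff m$ is odd.
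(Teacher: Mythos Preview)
Your proposal is correct and follows exactly the approach the paper intends: the paper simply cites Theorem~1.65 of \cite{CBMS} without spelling out the verification, and you have filled in precisely those routine checks (the two $\pmod{24}$ congruences with $N=576$, $\delta=24$, $r_{24}=m-1$, and the positivity of the cusp orders). There is nothing to add; your write-up is in fact more detailed than what the paper itself provides.
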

\vspace{0.3cm}
If $f(z)$ is a modular form, then we can act on it with \textit{Hecke operators}, which we define as follows: if $f(z)=\sum_{n=0}^{\infty}a(n)q^n\in M_k(\Gamma_0(N),\chi)$, then the action of the \textit{Hecke operator} $T_{p,k,\chi}$ on $f(z)$ is defined by
\begin{equation*}
f(z)\hspace{0.15cm}|\hspace{0.15cm} T_{p,k,\chi}:=\sum_{n=0}^{\infty}(a(pn)+\chi(p)p^{k-1}a(n/p))q^n.
\end{equation*}
\cite[p. 21]{CBMS}.  We then recall the result in \cite[p. 43]{CBMS}:
\begin{theorem}\label{blackbox}
If $f(z)$ is a modular form of integer weight $k$, then there exist infinitely many primes $p$, such that
\begin{equation*}
\hspace{0.5cm} f(z)\hspace{0.15cm}|\hspace{0.15cm}T_{p,k,\chi}\equiv0\pmod{m}.
\end{equation*}
\end{theorem}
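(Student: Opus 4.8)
The plan is to deduce this from the finiteness of the space of modular forms modulo $m$ together with the Galois-theoretic dictionary that links Hecke operators to Frobenius elements, after which Chebotarev's density theorem supplies the primes. Throughout I take $f\in M_k(\Gamma_0(N),\chi)$ with coefficients in the ring of integers $\mathcal O_K$ of some number field $K$ (in the present application $f=\eta^{m-1}(24z)$ has rational integer coefficients), and I allow $m$ to be read as an ideal of $\mathcal O_K$ if convenient.

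First I would establish the relevant finiteness. The space $M_k(\Gamma_0(N),\chi)$ is finite-dimensional, and by Sturm's bound two of its members are congruent modulo $m$ as soon as their coefficients agree up to the Sturm bound; hence the image $V$ of the $\mathcal O_K$-lattice of such forms under reduction modulo $m$ is a \emph{finite} module. The Hecke operators $T_{p,k,\chi}$ with $p\nmid Nm$ preserve $M_k(\Gamma_0(N),\chi)$ and commute, so their reductions act on $V$ and generate a finite commutative ring $\mathbb T\subseteq\mathrm{End}(V)$. Consequently the assignment $p\mapsto T_{p,k,\chi}\bmod m$ takes only finitely many values in $\mathbb T$, and to prove the theorem it suffices to show that the zero operator — or at least an operator annihilating the reduction of $f$ — occurs for infinitely many $p$.

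Next I would convert this into a statement about Frobenius. Decomposing the semisimplification of the finite Hecke module $V$ into systems of eigenvalues and invoking the Deligne(--Serre) construction, one attaches to $V$ a continuous Galois representation $\rho\colon\Gal(\overline{\Rat}/\Rat)\to\mathrm{GL}(V_\rho)$ with finite image, characterized by the property that the characteristic polynomial of $\rho(\mathrm{Frob}_p)$ encodes $T_{p,k,\chi}$ for all $p\nmid Nm$ — its trace recovering the Hecke eigenvalues $a(p)$ and its determinant recovering $\chi(p)p^{k-1}$. Because $\rho$ factors through $\Gal(L/\Rat)$ for a finite extension $L/\Rat$, Chebotarev's density theorem guarantees that each conjugacy class of $\mathrm{im}(\rho)$ is realized as $\mathrm{Frob}_p$ for a positive density of primes $p$, so any operator value that is attained at all is attained infinitely often.

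The crux — and the step I expect to be the main obstacle — is to exhibit a Frobenius element forcing $f\mid T_{p,k,\chi}\equiv 0$. For this I would use complex conjugation $c\in\Gal(\overline{\Rat}/\Rat)$. Each two-dimensional constituent of $\rho$ is \emph{odd}: its determinant at $c$ equals $\chi(-1)(-1)^{k-1}=(-1)^k(-1)^{k-1}=-1$, so $\rho(c)$ has eigenvalues $+1,-1$ and hence trace $0$ on every constituent at once. Applying Chebotarev to the class of $c$ then produces infinitely many primes $p$ with $\mathrm{Frob}_p$ conjugate to $c$, so that every relevant Hecke eigenvalue $a(p)$ reduces to $0$ modulo $m$ simultaneously, which is precisely the condition $f\mid T_{p,k,\chi}\equiv0\pmod m$. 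The delicate points to verify are the construction of $\rho$ for a possibly non-eigen Hecke module and for a possibly composite modulus $m$ (handling prime-power moduli by working in $\mathrm{GL}_2$ of the finite coefficient ring, or by reducing to the residue characteristic and bootstrapping), and the uniform oddness that makes the trace vanish on all constituents at the same time.
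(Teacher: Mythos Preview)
The paper does not prove this statement: Theorem~\ref{blackbox} is simply recalled from \cite[p.~43]{CBMS} and used as a black box, so there is no ``paper's proof'' to compare against. Your sketch is in fact the standard Serre argument underlying that reference --- reduce to a finite Hecke module, attach odd two-dimensional Galois representations to the mod-$m$ eigensystems, and hit complex conjugation with Chebotarev --- so in spirit you are reproducing exactly what the citation points to.

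That said, there is one genuine gap in your outline. You write that having every Hecke eigenvalue $a_{g_i}(p)\equiv 0$ ``is precisely the condition $f\mid T_{p,k,\chi}\equiv 0\pmod m$.'' This identification is not automatic: knowing that all eigenvalues of $T_p$ vanish modulo $m$ only tells you that $T_p$ acts \emph{nilpotently} on the reduced space, not that it annihilates your particular $f$. Concretely, if two eigenforms $g_1,g_2$ are congruent modulo $\ell$ and $f$ involves $(g_1-g_2)/\ell$ with integral coefficients, then $a_{g_1}(p)\equiv a_{g_2}(p)\equiv 0\pmod\ell$ does not force $f\mid T_p\equiv 0\pmod\ell$. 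The fix --- which your final paragraph gestures toward but does not connect to this issue --- is to work with the $\ell$-adic (or mod $\ell^N$ for $N$ large) representations rather than the residual ones: complex conjugation has trace \emph{exactly} zero there, so Chebotarev applied to a deep enough finite quotient forces $a_{g_i}(p)\equiv 0\pmod{\ell^N}$ with $N$ large enough to clear any denominators in the eigenform decomposition of $f$. You should make this step explicit.
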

\boop
\subsection{Proof of Theorem \ref{aps}} Using the above machinery, we prove that expressions $\Psi_m=\sum d_m(n)q^n$ exhibit Ramanujan-like congruences in the coefficients $d_m(n)$.
\begin{proof}[Proof of Theorem \ref{aps}]
Let $m = p^t$ for some prime $p$. First, we notice that
\begin{align*}
\sum_{n=0}^\infty d_m(n)q^n &= \prod_{t=1}^\infty\frac{1-q^{mt}}{1-q^t}\\
&\equiv\prod_{t=1}^{\infty}(1-q^t)^{m-1}\pmod{p}.
\end{align*} 
Then, we see that
\begin{align*}
\eta^{m-1}(24z)&=q^{m-1}\prod_{t=1}^\infty(1-q^{24t})^{m-1}\\
&= \sum_{n=0}^\infty d_m(n)q^{24n+m-1}.
\end{align*}
We then define $b_m(n)$ such that $b_m(24n+m-1) = d_m(n)$, so we have
\begin{equation*}
\eta^{m-1}(24z) = \sum_{n=0}^\infty b_m(n)q^n.
\end{equation*}
We first consider the case where $p$ is an odd prime. If $p$ is odd, then we have that $m-1\in2\Z$, so our eta product has weight $k = {\frac{m-1}{2}} \in \Z$.
Then, by Proposition \ref{cusp},
\begin{equation*}
f(z)=\eta^{m-1}(24z)\in S_{\frac{m-1}{2}}(\Gamma_0(N),\chi).
\end{equation*}
Then, since $\eta^{m-1}(24z)$ has integer weight, we can apply Theorem \ref{blackbox}, which states that there exist infinitely many primes $l$ such that
\begin{align}
\label{infp}\sum_{n=0}^{\infty}(b_m(ln)+\chi(l)l^{(k-1)}b_m(n/l))q^n&\equiv0\pmod{p}.\\
\intertext{Then}
\notag\hspace{0.5cm} b_m(ln)+\chi(l)l^{(k-1)}b_m(n/l)&\equiv0\pmod{p}
\end{align}\\
for each $n$. Suppose now that $l \not|$ $n$. Then, $b_m(n/l) = 0$, so we have that
\begin{equation*}
b_m(l n) \equiv 0 \pmod{p}.
\end{equation*}
Therefore, since $b_m(24n+m-1) = d_m(n)$, we have 
\begin{equation*}
d_m\left(\frac{l n - m+1}{24}\right) \equiv 0 \pmod{p}
\end{equation*}
where $n\not\equiv 0 \pmod{l}$. Choosing an appropriate representative for the equivalence class of $n \in \Z/l\Z$, we find that 
\begin{equation*}
\frac{l n - m+1}{24}
\end{equation*}
is of the form $l^2n + d$ for some $d\in \Z$, so
\begin{equation*}
d_m(l^2n+d) \equiv 0 \pmod{p}.
\end{equation*}
Thus, since there exist infinitely many primes $l$ satisfying \eqref{infp}, we have that there exist infinitely many non-nested sequences $cn+d$ such that
\begin{equation*}
d_m(cn+d) \equiv 0 \pmod{p}.
\end{equation*}

We now consider the remaining case where $p=2$, noting that the argument above still applies once we make a simple substitution. Define $\theta(z)\in M_{\frac{1}{2}}(\Gamma_0(4),\chi)$ as
\begin{equation*}
\theta(z) := 1 + 2q + 2q^2 + ... \equiv 1 \pmod{2}.
\end{equation*}
Then we see that
\begin{equation*}
\eta^{m-1}(24z) \equiv \eta^{m-1}(24z)\cdot \theta(z) \pmod{2}.
\end{equation*}
Thus, the argument above holds when applied to the modular form $\eta^{m-1}(24z)\cdot \theta(z)$, which has integer weight $\frac{m}{2} = 2^{t-1}$, yet retains congruence modulo 2 to the original form $\eta^{m-1}(24z)$.
\end{proof} 

\bibliographystyle{amsplain}

\end{document}